\newcommand{\ourpg}{\Sigma_{1,2}}
\title{Stein-fillable open books of genus one that do not admit positive factorisations}
\author{Vitalijs Brejevs and Andy Wand}
\address{School of Mathematics and Statistics, University of Glasgow, Glasgow, UK}
\email{Vitalijs.Brejevs@glasgow.ac.uk}
\address{School of Mathematics and Statistics, University of Glasgow, Glasgow, UK}
\email{Andy.Wand@glasgow.ac.uk}
\begin{document}
	\begin{abstract}
		We construct an infinite family of genus one open book decompositions supporting Stein-fillable contact structures and show that their monodromies do not admit positive factorisations. This extends a line of counterexamples in higher genera and establishes that a correspondence between Stein fillings and positive factorisations only exists for planar open book decompositions.
	\end{abstract}
	\maketitle
	
	\section{Introduction}
	\label{sec:intro}

	In the foundational work~\cite{giroux}, Giroux has established a one-to-one correspondence between isotopy classes of contact structures on a 3-manifold $ Y $ and positive stabilisation classes of open book decompositions of $ Y $, enabling one to consider questions of contact and symplectic geometry through a powerful lens of surface mapping class groups. In particular, a natural question when studying contact manifolds is that of fillability, i.e., determining when a contact manifold can be the boundary of a symplectic manifold in some compatible way; in this paper, we are concerned with Stein fillability. Results of Giroux coupled with the work of Loi and Piergallini~\cite{LoiPiergallini}, Akbulut and Özbağcı~\cite{akbulutozbagci} and Plamenevskaya~\cite{plam-stein} drew a further connection between the worlds of surface diffeomorphisms and symplectic geometry, establishing that a contact manifold is Stein-fillable if and only if the monodromy of \emph{some} open book supporting it admits a positive factorisation into Dehn twists. The picture, however, is still complicated: for example, proving that a contact manifold is not Stein-fillable this way entails the usually intractable task of obstructing positive factorisability of \emph{all} monodromies of supporting open books.

	A tempting but untrue strengthening of this result would be the claim that the monodromy of \emph{every} open book $ (\Sigma, \varphi) $ supporting a Stein-fillable contact manifold $ (Y, \xi) $ factorises positively. Indeed, a result of Wendl~\cite{wendl} implies that if the genus $ g(\Sigma) = 0 $, then Stein fillings of $ (Y, \xi) $, up to symplectic deformation, are in one-to-one correspondence with positive factorisations of $ \varphi $, up to conjugation. However, if $ g(\Sigma) \geqslant 2 $, it follows from the work of the second author~\cite{wand2015} and Baker, Etnyre and Van Horn-Morris~\cite{BEvHM} that $ \varphi $ need not admit any positive factorisation. The case of $ g(\Sigma) = 1 $ has been previously studied by Lisca~\cite{lisca-stein} who has shown that if $ \Sigma $ has one boundary component and $ Y $ is a Heegaard Floer $ L $-space, then $ (Y, \xi) $ is Stein-fillable if and only if $ \varphi $ admits a positive factorisation. The purpose of this paper is to exhibit for the first time a family of Stein-fillable contact manifolds supported by open books with $ g(\Sigma) = 1$ whose monodromies do not factorise positively.

	\begin{thm}
	Let $ n \geqslant 0 $. Then $ (\Sigma_{1,2}, \varphi_n) $ with $ \varphi_n = \tau_\alpha \tau_\beta \tau_\gamma^{-1} \tau_{\delta_1} \tau_{\delta_2}^{4 + n} $, as illustrated in Figure~\ref{fig:monodromy-intro}, is an open book decomposition supporting a Stein-fillable contact manifold, but $ \varphi_n $ does not admit a positive factorisation. \vspace{-2em}
	\end{thm}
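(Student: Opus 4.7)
The proof naturally separates into two parts: exhibiting a Stein filling of the contact manifold supported by $(\ourpg, \mon_n)$, and obstructing positive factorisations of $\mon_n$ on $\ourpg$.

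For Stein-fillability, by the Giroux correspondence together with the Loi--Piergallini, Akbulut--Ozbagci and Plamenevskaya theorem cited in the introduction, it suffices to produce some open book obtained from $(\ourpg, \mon_n)$ by positive stabilisations whose stabilised monodromy admits a positive factorisation, since positive stabilisation preserves both the supported contact structure and Stein-fillability. The strategy is to exploit the abundance of positive twists in the word -- in particular the large power $\tau_{\delta_2}^{4+n}$ -- to absorb the single negative twist $\tau_\gamma^{-1}$ via a standard mapping class group relation such as the chain or lantern relation, possibly after one positive stabilisation. Concretely, I would search for a positive factorisation of $\tau_\gamma$ into Dehn twists along curves that either already appear in $\mon_n$ or become available after stabilisation; combining this with the existing $\tau_\gamma^{-1}$ then yields the desired positive word, which is verified to be a positive product of Dehn twists by direct inspection.

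For the obstruction, the plan is to adapt the techniques used in the higher-genus counterexamples of the second author~\cite{wand2015} and of Baker--Etnyre--Van Horn-Morris~\cite{BEvHM} to the genus one setting. These methods identify invariants of the underlying mapping class that any positive factorisation must respect, and then show the invariants take inconsistent values for the candidate monodromy. In the present case, the relatively tractable mapping class group of $\ourpg$ is an asset: I expect to combine a homological or geometric-intersection constraint on which isotopy classes of simple closed curves may appear as factors in any positive factorisation of $\mon_n$ -- using, for instance, the total boundary twisting count and the homology class of each factor -- with a direct case analysis ruling out each remaining combinatorial possibility.

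The principal obstacle is this second half. Two positive factorisations representing the same mapping class can differ not only by Hurwitz moves and commutations but also by non-local substitutions such as the chain and lantern relations, so the obstruction must be a genuine invariant of $\mon_n \in \Gamma_{\ourpg}$, not merely of a particular word. Pinning down such an invariant -- sharp enough to rule out all positive factorisations while remaining robust under the above moves -- is the central technical challenge and will likely dominate the body of the proof; the infinite family of parameters $n \geqslant 0$ further demands that the chosen invariant scale uniformly with $n$.
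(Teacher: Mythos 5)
Your proposal is a plan rather than a proof, and both halves have genuine gaps. For Stein-fillability, the paper does not ``absorb'' the negative twist $\tau_\gamma^{-1}$ after stabilisation: it constructs $(\Sigma_{1,2},\varphi_0)$ directly, via Conway's algorithm, as the open book supporting inadmissible transverse $(+5)$-surgery on the right-handed trefoil, identifies the underlying manifold with $L(5,1)$ by a Kirby calculus computation, invokes tightness (Hedden--Plamenevskaya) and the classification of fillings of $L(p,1)$, $p\neq 4$ (McDuff, Plamenevskaya--Van Horn-Morris) to conclude Stein fillability, and then obtains $\varphi_n$ by Legendrian surgery on the $\delta_2$ binding component, which preserves fillability. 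Your route is not wrong in principle --- if the contact structure is fillable, Giroux plus Loi--Piergallini guarantees that \emph{some} positive stabilisation has positively factorisable monodromy --- but as a \emph{proof} of fillability it is vacuous until you actually exhibit the stabilisation and the positive word, which you do not. ``I would search for a positive factorisation of $\tau_\gamma$'' is the entire content of the claim, not a step toward it.

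For the obstruction, your instinct to count boundary twists is exactly the right first move and matches the paper's Lemma~\ref{lem:boundary_twists}: using Luo's presentation, the quantity $|\delta_2|_w - |\delta_1|_w$ is an invariant of the mapping class because the only relations involving boundary-parallel twists are lantern relations, and every lantern in $\Sigma_{1,2}$ has both $\delta_i$ among its boundary components. But the decisive second step is absent from your proposal. From $|\delta_1|_w\geqslant 0$ and the invariance one gets $|\delta_2|_w\geqslant 3+n$, so a putative positive factorisation $w$ splits as $w'\tau_{\delta_2}^{3+n}$ with $w'$ a positive factorisation of $\varphi'=\tau_\alpha\tau_\beta\tau_\gamma^{-1}\tau_{\delta_1}\tau_{\delta_2}$; this single mapping class is then ruled out not by a combinatorial case analysis but by recognising that $(\Sigma_{1,2},\varphi')$ supports inadmissible transverse $(+2)$-surgery on the trefoil, i.e.\ the Seifert fibred space $M(-1;\frac12,\frac13,\frac14)$, which by Lecuona--Lisca admits \emph{no} Stein-fillable contact structure at all --- so by Giroux no supporting open book of it can have positively factorisable monodromy. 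Your proposed ``direct case analysis ruling out each remaining combinatorial possibility'' faces infinitely many isotopy classes of candidate curves on $\Sigma_{1,2}$ and you give no mechanism for making it terminate; without the reduction to a non-fillable manifold, or some comparably strong external input, the argument does not close.
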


	\begin{center}
		\begin{figure}[h]
			\centering
			\def\svgwidth{0.5\textwidth}
			\makebox[\textwidth][c]{
\begingroup%
  \makeatletter%
  \providecommand\color[2][]{%
    \errmessage{(Inkscape) Color is used for the text in Inkscape, but the package 'color.sty' is not loaded}%
    \renewcommand\color[2][]{}%
  }%
  \providecommand\transparent[1]{%
    \errmessage{(Inkscape) Transparency is used (non-zero) for the text in Inkscape, but the package 'transparent.sty' is not loaded}%
    \renewcommand\transparent[1]{}%
  }%
  \providecommand\rotatebox[2]{#2}%
  \newcommand*\fsize{\dimexpr\f@size pt\relax}%
  \newcommand*\lineheight[1]{\fontsize{\fsize}{#1\fsize}\selectfont}%
  \ifx\svgwidth\undefined%
    \setlength{\unitlength}{138.86382456bp}%
    \ifx\svgscale\undefined%
      \relax%
    \else%
      \setlength{\unitlength}{\unitlength * \real{\svgscale}}%
    \fi%
  \else%
    \setlength{\unitlength}{\svgwidth}%
  \fi%
  \global\let\svgwidth\undefined%
  \global\let\svgscale\undefined%
  \makeatother%
  \begin{picture}(1,0.71190169)%
    \lineheight{1}%
    \setlength\tabcolsep{0pt}%
    \put(0,0){\includegraphics[width=\unitlength,page=1]{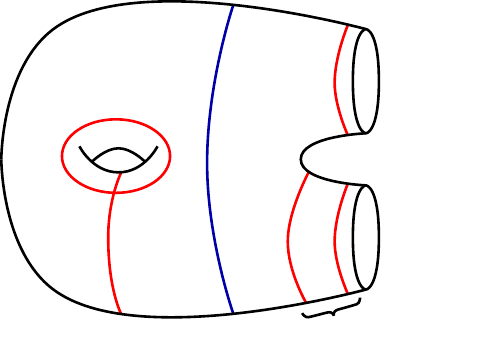}}%
    \put(0.22315832,0.48703189){\color[rgb]{0,0,0}\makebox(0,0)[lt]{\lineheight{1.25}\smash{\begin{tabular}[t]{l}$\alpha$\end{tabular}}}}%
    \put(0.25114554,0.18703241){\color[rgb]{0,0,0}\makebox(0,0)[lt]{\lineheight{1.25}\smash{\begin{tabular}[t]{l}$\beta$\end{tabular}}}}%
    \put(0.44668517,0.37582025){\color[rgb]{0,0,0}\makebox(0,0)[lt]{\lineheight{1.25}\smash{\begin{tabular}[t]{l}$\gamma$\end{tabular}}}}%
    \put(0.81284698,0.55429826){\color[rgb]{0,0,0}\makebox(0,0)[lt]{\lineheight{1.25}\smash{\begin{tabular}[t]{l}$\delta_1$\end{tabular}}}}%
    \put(0.81284698,0.18703179){\color[rgb]{0,0,0}\makebox(0,0)[lt]{\lineheight{1.25}\smash{\begin{tabular}[t]{l}$\delta_2$\end{tabular}}}}%
    \put(0,0){\includegraphics[width=\unitlength,page=2]{monodromy-intro.pdf}}%
    \put(0.65366342,0.00529196){\color[rgb]{0.15686275,0.04313725,0.04313725}\makebox(0,0)[lt]{\lineheight{1.25}\smash{\begin{tabular}[t]{l}$4+n$\end{tabular}}}}%
  \end{picture}%
\endgroup%
}
			\caption{
				An open book decomposition $ (\Sigma_{1,2}, \tau_\alpha \tau_\beta \tau_\gamma^{-1} \tau_{\delta_1} \tau_{\delta_2}^{4 + n}) $ with $ n \geqslant 0 $.
			} \label{fig:monodromy-intro}
		\end{figure}
	\end{center}

	\begin{rem}
	The open books in our examples have pages $ \Sigma_{1,2} $ with two boundary components, and we note that one can add 1-handles to them to obtain any surface $ \Sigma_{g,n} $ with $ g, n \geqslant 1 $ other than $ \Sigma_{1,1} $. Since adding a 1-handle to the page of an open book amounts to, on the level of 3-manifolds, taking a contact connected sum with $ S^1 \times S^2 $ endowed with its unique Stein-fillable contact structure, it also preserves Stein fillability. Moreover, if one attaches a 1-handle while extending the monodromy by the identity on the co-core of the 1-handle, it does not change the property of not being positively factorisable (cf.~\cite[Remark 5.3]{lisca-stein}). Hence, this leaves open only the case of open books whose pages are one-holed tori.
	\end{rem}




	The structure of the paper is as follows. In Section~\ref{sec:defs} we recall some pertinent facts about open book decompositions and their interplay with contact structures, as well as the notion of Stein fillability. In Section~\ref{sec:trefoil} we construct, via transverse contact surgery, an infinite family of Stein-fillable contact manifolds with explicitly given genus one open books. Finally, in Section~\ref{sec:non-positive} we show that the monodromies of those open books do not admit positive factorisations. \\

	\noindent \textbf{Acknowledgements.} We would like to thank Jeremy Van Horn-Morris for directing our attention towards this problem, and Brendan Owens for many helpful conversations. The first author was supported by the Carnegie Trust.

	\section{Basic definitions}
	\label{sec:defs}

	Recall that an \emph{open book decomposition} (or just \emph{open book}) of a closed 3-manifold $ Y $ is a pair $ (L, \pi) $ where $ L \subset Y $ is an oriented link, called the $ \emph{binding} $, and $ \pi : Y \setminus L \ra S^1 $ is a fibration such that $ \pi^{-1}(s) $ for any $ s \in S^1 $ is the interior of a compact orientable surface $ \Sigma_\pi $, called the \emph{page}, with $ \pp \Sigma_\pi = L $. Now, given a compact oriented surface $ \Sigma $ with boundary, denote by $ \Gamma_\Sigma $ the {\emph{mapping class group} of $ \Sigma $ consisting of isotopy classes of orientation-preserving self-diffeomorphisms of $ \Sigma $ that restrict to the identity on $ \pp \Sigma $; we shall confuse classes in $ \Gamma_\Sigma $ with their representatives. Any locally trivial bundle over oriented $ S^1 $ with the fibre $ \Sigma $ is canonically diffeomorphic to the fibration $ M_{\varphi} \ra S^1 $ for $ M_{\varphi} = [0, 1] \times \Sigma / (0, \varphi(x)) \sim (1, x) $ and $ \varphi $ an orientation-preserving self-diffeomorphism of $ \Sigma $, taken up to conjugation. Hence, an open book decomposition $ (L, \pi) $ of a 3-manifold $ Y_{(L, \pi)} $ determines a map $ \varphi_\pi \in \Gamma_{\Sigma_\pi} $, called the \emph{monodromy}. On the other hand, given a pair $ (\Sigma, \varphi) $ with $ \varphi \in \Gamma_\Sigma $ and $ \pp \Sigma \neq \varnothing $, we may construct a closed 3-manifold $ Y_{(\Sigma, \varphi)} $ in the following way: take the mapping torus $ M_\varphi $, identify all its boundary components with $ \bigsqcup_n S^1 \times S^1 $ for some $ n > 0 $, where in each $ S^1 \times S^1 $ the first factor comes from the quotient of the unit interval and the second from $ \pp \Sigma $, and glue in solid tori $ \bigsqcup_n D^2 \times S^1 $ via the identity map $ \bigsqcup_n \pp D^2 \times S^1 \ra \bigsqcup_n S^1 \times S^1 $ . The resultant $ Y_{(\Sigma, \varphi)} $ admits an open book decomposition with the binding given by the cores $ \bigsqcup_n \{ 0 \} \times S^1 $ of the solid tori, the page $ \Sigma $ and monodromy $ \varphi $. Hence, we can pass between $ (L, \pi) $ and $ (\Sigma, \varphi) $ to determine an open book decomposition of a closed 3-manifold up to diffeomorphism.

	Given $ \varphi \in \Gamma_\Sigma $, we say that $ \varphi $ admits a \emph{positive factorisation} if it can be written as a product of positive Dehn twists about essential simple closed curves in $ \Sigma $. We will denote by $ \Gamma^+_\Sigma $ the sub-monoid of $ \Gamma_\Sigma $ consisting of isotopy classes of positively factorisable maps.


	Recall also that a \emph{(positive) contact structure} on $ Y $ is an oriented plane field $ \xi \subset TY $ given by $ \ker \alpha $ for some 1-form $ \alpha \in \Omega^1(Y) $ satisfying $ \alpha \wedge \dd \alpha > 0 $. We say that $ \xi $ is \emph{supported} by an open book decomposition of $ Y $ if $ \alpha > 0 $ on the binding and $ \dd \alpha > 0 $ on the interior of the pages. In fact, every open book decomposition of $ Y $ supports some contact structure~\cite{thurston-wink}. Moreover, as recounted in Section~\ref{sec:intro}, Giroux has shown~\cite{giroux} that there exists a one-to-one correspondence between contact structures on $ Y $ up to contact isotopy and open book decompositions of $ Y $ up to \emph{positive stabilisation}, i.e., up to adding a 1-handle to the page and pre-composing the monodromy with a positive Dehn twist about some closed curve in the page intersecting the co-core of the 1-handle once. If a positive stabilisation of $ (\Sigma, \varphi) $ yields $ (\Sigma', \varphi') $, then the contact manifolds $ (Y, \xi) $ and $ (Y', \xi') $ supported by those respective open books are \emph{contactomorphic}, i.e., there exists a diffeomorphism $ Y \ra Y' $ that induces a map carrying $ \xi $ to $ \xi' $.

	A \emph{Stein surface} is a complex surface $ W $ endowed with a Morse function $ f : W \ra \R $ such that for any non-critical point $ c $ of $ f $, the level set $ f^{-1}(c) $ inherits a contact structure $ \xi_c $, induced by the complex tangencies, that orients $ f^{-1}(c) $ as when $ f^{-1}(c) $ is viewed as the boundary of the complex manifold $ f^{-1}((-\infty,c]) $. We say that a contact manifold $ (Y, \xi) $ is \emph{Stein-fillable} if $ Y $ is orientation-preserving diffeomorphic to such $ f^{-1}(c) $ and $ \xi $ is isotopic to $ \xi_c $. If the 3-manifold is understood, we might simply say that $ \xi $ is Stein-fillable. A necessary condition for $ (Y, \xi) $ to be Stein-fillable is for $ \xi $ to be \emph{tight}, i.e., there being no embedded disc $ D^2 \subset Y $ tangent to $ \xi $ everywhere along $ \pp D^2 $; if there is such a disc, $ \xi $ is \emph{overtwisted}. As noted in the introduction, $ (Y, \xi) $ is Stein-fillable if and only if the monodromy of some open book decomposition of $ Y $ supporting $ \xi $ admits a positive factorisation.


	Having refreshed our memory, we are now ready to tackle the task of constructing Stein-fillable contact manifolds supported by genus one open books whose monodromies do not positively factorise.

	\section{A family of Stein-fillable contact manifolds supported by genus one open books}
	\label{sec:trefoil}

	The goal of this section is to use methods of Conway~\cite{Conway2019} to construct a family of Stein-fillable contact manifolds by surgery techniques, and to determine supporting genus one open book decompositions. Recall that an oriented knot $ K \subset (Y, \xi) $ is \emph{transverse} if its oriented tangent vector is always positively transverse to $ \xi $. By \emph{transverse surgery} on $ K $ we mean an analogue of the usual surgery operation in the contact category, defined by Gay~\cite{Gay-transverse}, in which we first cut out, then re-glue a contact neighbourhood of $ K $ to obtain a new contact manifold; the adjective \emph{`inadmissible'} characterises `adding the twisting' near the knot, while \emph{admissible} transverse surgery `removes the twisting'.

	\subsection{An algorithm for describing open books supporting transverse-surgered manifolds}
	First, we collect necessary ingredients from~\cite{Conway2019} to describe open books supporting the result of inadmissible transverse surgery on a knot that is already a component of the binding of some open book of the original manifold.

	Suppose $ (\Sigma, \varphi) $ is an open book, and $ \Sigma $ has a boundary component $ K $, forming a part of the binding. In the following, by `stabilising $ K $' we mean adding a 1-handle across $ K $ and pre-composing the monodromy with a positive Dehn twist about a curve that is boundary-parallel to one of the two new boundary components. After that, we continue denoting by $ K $ the other boundary component, without a twist about it. Recall that given a rational number $ r < 0 $, we can write it as a negative continued fraction $ [a_1 + 1, a_2, \dots, a_n]^- $, where
	\[
		r = a_1 + 1 - \cfrac{1}{a_2 - \cfrac{1}{\dots - \cfrac{1}{a_n}}}
	\]
	and $ a_i \leqslant -2 $ for all $ i $. The following two propositions give the desired procedure.

	\begin{prop}[{\cite[Proposition 3.9]{Conway2019}}]
	Let $ r \in \Q $ with $ r < 0 $ and $ r = [a_1 + 1, a_2, \dots, a_n]^- $. The open book supporting admissible transverse $ r $-surgery with respect to the page slope on the binding component $ K $ is obtained by, for each $ i = 1, \dots , n $ in order, stabilising $ K $ positively $ | a_i + 2 | $ times and adding a positive Dehn twist about $ K $.
	\end{prop}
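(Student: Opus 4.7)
My plan is to prove the statement by induction on the length $n$ of the negative continued fraction expansion, using as the basic building block the interplay between positive stabilisation of an open book along $K$ and admissible transverse surgery of slope $-1$. The geometric heart of the argument is that each positive stabilisation along $K$ shifts the page slope on the (relabelled) $K$ by one in a controlled way, while appending a positive Dehn twist about $K$ realises admissible transverse $(-1)$-surgery with respect to the current page slope.

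For the base case $n=1$, take $r = a_1 + 1 \leqslant -1$. When $a_1 = -2$, so $r = -1$, admissible transverse $(-1)$-surgery on $K$ can be read off directly from its definition by excising a standard contact neighbourhood of $K$ and re-gluing; the re-gluing map differs from the identity by a single twist along the boundary, which matches the boundary-parallel Dehn twist appearing in the new monodromy once the neighbourhood is identified with a thickened page. When $a_1 < -2$, the $|a_1+2|$ preliminary stabilisations shift the page slope incrementally, so that the subsequent Dehn twist about the current $K$ realises a surgery whose total slope relative to the original page slope is exactly $r$. This establishes the base case.

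For the inductive step, write $r = a_1 + 1 - 1/s$ with $s = [a_2, \ldots, a_n]^-$ and invoke the slam-dunk relation, which decomposes transverse $r$-surgery on $K$ into transverse $(a_1+1)$-surgery on $K$ followed by transverse $s$-surgery on a meridional pushoff of $K$. Applying the base case to the first stage produces $|a_1+2|$ stabilisations and one Dehn twist, after which the pushoff of $K$ reappears as a binding component of the new open book with page slope set up precisely so that admissible transverse $s$-surgery on it is the next operation to perform. The inductive hypothesis applied to the shorter continued fraction $[a_2, \ldots, a_n]^-$ then produces the remaining sequence of stabilisations and Dehn twists in the prescribed order.

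The main obstacle I expect is book-keeping: one must keep precise track of the page slope on $K$ at every stage and verify that the slam-dunk picture genuinely translates into the transverse/contact category on the level of open books, so that the pushoff playing the role of $K$ in the inductive step really does inherit the correct contact framing. It is at this point that the specific offset by $2$ in the exponent $|a_i + 2|$ must be shown to emerge naturally from the interaction between the continued fraction convention and the page-slope shifts contributed by each stabilisation, rather than through an ad hoc cancellation.
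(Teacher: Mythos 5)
The paper offers no proof of this proposition: it is imported verbatim from Conway~\cite{Conway2019}, so there is no internal argument to measure your attempt against. On its own terms, your outline follows the route one would expect (and, in essence, the route Conway takes): induct on the length of the continued fraction, with the two load-bearing inputs being (i) that adding a positive Dehn twist about a binding component $K$ realises admissible transverse $(-1)$-surgery on $K$ with respect to the page slope, and (ii) that a positive stabilisation along $K$ leaves the supported contact manifold unchanged while shifting the page slope of the relabelled $K$ by one. Granting these, the arithmetic $-|a_i+2|-1 = a_i+1$ makes your base case and the exponents come out exactly as you describe.

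The gap is that neither (i) nor (ii) is actually established. Claim (i) is not something one ``reads off directly from the definition'': identifying the contact solid torus re-glued in an admissible transverse surgery with a neighbourhood of the binding in the open book whose monodromy has acquired a boundary-parallel twist is a theorem (due to Baldwin and Etnyre, and used as the base of Conway's argument), and it is the technical heart of the proposition. Likewise, in your inductive step the slam dunk is a statement about smooth surgery diagrams; to transplant it into the transverse category you must show that the auxiliary curve carrying the coefficient $s=[a_2,\dots,a_n]^-$ is transversely isotopic, with the correct contact framing, to the binding component on which the next batch of stabilisations and twists is performed. You flag both points as ``book-keeping'', but they are precisely where the content of the proposition lies; as written, the argument is a plausible outline rather than a proof.
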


	\begin{prop}[{\cite[Proposition 3.12]{Conway2019}}]
	\label{prop:inadmissible}
		Let $ r = p / q \in \Q $ with $ r > 0 $, $ n $ a positive integer such that $ 1 / n < r $, and $ r' = p / (q - np) $. The open book supporting inadmissible transverse $ r $-surgery with respect to the page slope on the binding component $ K $ is obtained by first adding $ n $ negative Dehn twists about $ K $, and then performing admissible transverse $ r' $-surgery on $ K $.
	\end{prop}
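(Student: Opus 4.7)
The plan is to reduce inadmissible transverse $r$-surgery with $r>0$ to the admissible case handled by the preceding proposition. The strategy is to peel off the ``twisting'' contribution of the inadmissible surgery by modifying the monodromy, leaving only an admissible surgery of some new slope $r'<0$ that falls within the scope of Proposition 3.9.

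For the first step, I would interpret the insertion of $\tau_K^{-n}$ into the monodromy contact-geometrically. The Dehn twist $\tau_K^{-n}$ is supported in a collar of $K$ in $\Sigma$ and does not affect the page away from $K$; tracing through the mapping-torus construction, it realises inadmissible transverse $1/n$-surgery on $K$ with respect to the page slope. In particular, $K$ remains a binding component of the new open book $(\Sigma, \varphi \tau_K^{-n})$, hence transverse to the supported contact structure, and the page longitude $\lambda$ of $K$ is unchanged, while the meridian of $K$ in the resulting 3-manifold becomes $\mu' = \mu + n\lambda$.

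For the second step, I would carry out the slope calculation in the new framing. A surgery curve of slope $r=p/q$ written in the original basis as $p\mu + q\lambda$ rewrites, via $\mu = \mu' - n\lambda$, as $p\mu' + (q-np)\lambda$, giving slope $r' = p/(q-np)$ in the basis $(\mu',\lambda)$. The hypothesis $1/n < r$ is equivalent to $n > 1/r$, which forces $q - np < 0$ and hence $r' < 0$. Applying the preceding proposition to $(\Sigma, \varphi \tau_K^{-n})$ with slope $r'$ then produces an open book for admissible transverse $r'$-surgery on $K$, which by the decomposition above describes the same contact manifold as inadmissible transverse $r$-surgery on $K$ in $(\Sigma, \varphi)$.

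The main obstacle is the justification that inserting $\tau_K^{-n}$ into the monodromy implements precisely the contact-geometric operation of inadmissible transverse $1/n$-surgery on $K$, together with the consequent shift of the meridian to $\mu + n\lambda$. This requires a careful comparison of the contact structures supported by $(\Sigma, \varphi)$ and $(\Sigma, \varphi \tau_K^{-n})$ in a neighbourhood of $K$, and identifying the $n$ units of transverse twisting introduced by the surgery with the twisting produced by the Dehn-twist modification, for which one appeals to the local model of a supported contact structure near a binding component.
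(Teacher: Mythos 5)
This proposition is quoted verbatim from Conway's paper and is not proved in the present one, so there is no in-paper argument to compare against; the comparison has to be with Conway's original proof, and your sketch follows essentially the same route. The reduction you describe --- realise the ``positive twisting'' part of the inadmissible surgery by inserting $\tau_K^{-n}$ into the monodromy, observe that this changes the meridian to $\mu' = \mu + n\lambda$ while keeping the page slope $\lambda$ fixed, rewrite $p\mu + q\lambda = p\mu' + (q - np)\lambda$, and check that $1/n < r$ forces $q - np < 0$ so that the residual surgery is admissible of slope $r' = p/(q-np) < 0$ and falls under the preceding proposition --- is exactly Conway's decomposition. The step you correctly single out as the main obstacle, namely that adding $n$ negative boundary-parallel Dehn twists about a binding component supports the contact structure obtained by inadmissible transverse $1/n$-surgery on that component, is not something you would need to establish from scratch: it is a separate, earlier result in Conway's paper (proved there via the local model of the contact structure near the binding and an analysis of the tori of fixed slope in the surgery solid torus), and the intended proof of Proposition 3.12 simply invokes it. So your outline is correct and complete modulo that citation; the only caution is that the composition of an inadmissible and an admissible transverse surgery along the same knot must be checked to agree with the single inadmissible $r$-surgery as \emph{contact} manifolds, not merely as smooth ones, which is again part of what that earlier lemma of Conway delivers.
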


	\subsection{Transverse $(+5)$-surgery on a right-handed trefoil}

	Given a simple closed curve $ \sigma $ in a surface $ \Sigma $, denote the positive Dehn twist about $ \sigma $ by $ \tau_\sigma $. Consider a transverse right-handed trefoil knot $ T $ in $ (S^3, \xi_\textrm{std}) $, where $ \xi_\textrm{std} $ is the standard tight contact structure on $ S^3 $. By stabilising the standard open book for $ (S^3, \xi_{\textrm{std}}) $ given by the positive Hopf band, we can take $ T $ to be the binding of an open book $ (\Sigma_{1,1}, \tau_\alpha \tau_\beta) $ with one-holed torus pages supporting $ (S^3, \xi_\textrm{std}) $; this open book is shown on the left of Figure~\ref{fig:monodromy}.

	\begin{center}
		\begin{figure}[h]
			\centering
			\def\svgwidth{0.7\textwidth}
			\makebox[\textwidth][c]{
\begingroup%
  \makeatletter%
  \providecommand\color[2][]{%
    \errmessage{(Inkscape) Color is used for the text in Inkscape, but the package 'color.sty' is not loaded}%
    \renewcommand\color[2][]{}%
  }%
  \providecommand\transparent[1]{%
    \errmessage{(Inkscape) Transparency is used (non-zero) for the text in Inkscape, but the package 'transparent.sty' is not loaded}%
    \renewcommand\transparent[1]{}%
  }%
  \providecommand\rotatebox[2]{#2}%
  \newcommand*\fsize{\dimexpr\f@size pt\relax}%
  \newcommand*\lineheight[1]{\fontsize{\fsize}{#1\fsize}\selectfont}%
  \ifx\svgwidth\undefined%
    \setlength{\unitlength}{228.11375896bp}%
    \ifx\svgscale\undefined%
      \relax%
    \else%
      \setlength{\unitlength}{\unitlength * \real{\svgscale}}%
    \fi%
  \else%
    \setlength{\unitlength}{\svgwidth}%
  \fi%
  \global\let\svgwidth\undefined%
  \global\let\svgscale\undefined%
  \makeatother%
  \begin{picture}(1,0.41030586)%
    \lineheight{1}%
    \setlength\tabcolsep{0pt}%
    \put(0,0){\includegraphics[width=\unitlength,page=1]{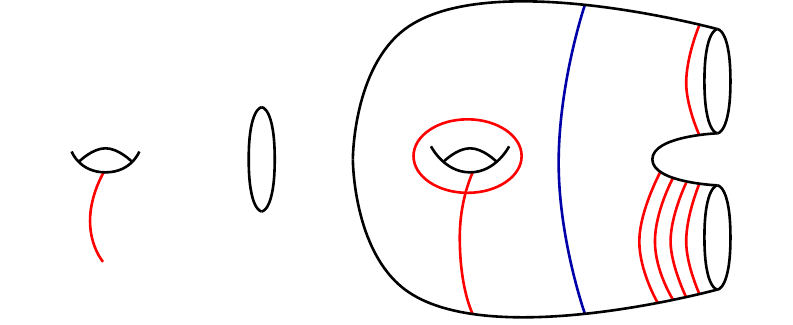}}%
    \put(0.11940806,0.27461248){\color[rgb]{0,0,0}\makebox(0,0)[lt]{\lineheight{1.25}\smash{\begin{tabular}[t]{l}$\alpha$\end{tabular}}}}%
    \put(0.13644514,0.11926244){\color[rgb]{0,0,0}\makebox(0,0)[lt]{\lineheight{1.25}\smash{\begin{tabular}[t]{l}$\beta$\end{tabular}}}}%
    \put(0.57970459,0.27341679){\color[rgb]{0,0,0}\makebox(0,0)[lt]{\lineheight{1.25}\smash{\begin{tabular}[t]{l}$\alpha$\end{tabular}}}}%
    \put(0.59674176,0.09079266){\color[rgb]{0,0,0}\makebox(0,0)[lt]{\lineheight{1.25}\smash{\begin{tabular}[t]{l}$\beta$\end{tabular}}}}%
    \put(0.71577615,0.20571691){\color[rgb]{0,0,0}\makebox(0,0)[lt]{\lineheight{1.25}\smash{\begin{tabular}[t]{l}$\gamma$\end{tabular}}}}%
    \put(0.88607099,0.39327304){\color[rgb]{0,0,0}\makebox(0,0)[lt]{\lineheight{1.25}\smash{\begin{tabular}[t]{l}$\delta_1$\end{tabular}}}}%
    \put(0.88607099,0.00516936){\color[rgb]{0,0,0}\makebox(0,0)[lt]{\lineheight{1.25}\smash{\begin{tabular}[t]{l}$\delta_2$\end{tabular}}}}%
    \put(0,0){\includegraphics[width=\unitlength,page=2]{monodromy.pdf}}%
  \end{picture}%
\endgroup%
}
			\caption{
				On the left: an open book $(\Sigma_{1,1}, \tau_\alpha \tau_\beta)$ supporting $ (S^3, \xi_{\textrm{std}}) $. On the right: an open book $ (\Sigma_{1,2}, \tau_\alpha \tau_\beta \tau_\gamma^{-1} \tau_{\delta_1} \tau_{\delta_2}^4) $ supporting $ (L(5,1), \xi) $, the result of transverse (+5)-surgery on a right-handed trefoil in $ (S^3, \xi_{\textrm{std}}) $.
			} \label{fig:monodromy}
		\end{figure}
	\end{center}

	In the notation of Proposition~\ref{prop:inadmissible}, we have $ r = p/q = 5/1 $, and, choosing $ n = 1 $, we get that $ r' = -5/4 = [-3 + 1, -2, -2 ,-2]^- $. Hence, an open book supporting $ (Y_0, \xi_0) $, the product of inadmissible transverse $(+5)$-surgery on $ T $, is obtained by adding a negative boundary twist $ \tau_\gamma^{-1} $ to the monodromy, stabilising once, adding a twist about $ K $, then adding three more twists about $ K $. Renaming $ K $ to $ \delta_2 $ and the other boundary component to $ \delta_1 $, we conclude that $ (Y_0, \xi_0) $ is supported by the open book $ (\Sigma_{1,2}, \varphi_0) $ shown in Figure~\ref{fig:monodromy} with $ \varphi_0 = \tau_\alpha \tau_\beta \tau_\gamma^{-1} \tau_{\delta_1} \tau_{\delta_2}^4 $. By~\cite[Theorem~1.2]{Hedden-Plam}, $ (Y_0, \xi_0) $ is tight. Figure~\ref{fig:kirby} shows that $ Y_0 $ can be obtained by $ -5 $-surgery on the unknot in $ S^3 $ and thus is diffeomorphic to the lens space $ L(5, 1) $. By work of McDuff~\cite{mcduff90} and Plamenevskaya and Van Horn-Morris~\cite{PlamVHM}, any tight contact structure on $ L(p,1) $ with $ p \neq 4 $ has a unique Stein filling, hence so does $ (Y_0, \xi_0) $.

	\begin{center}
		\begin{figure}[h]
			\centering
			\def\svgwidth{1.2\textwidth}
			\makebox[\textwidth][c]{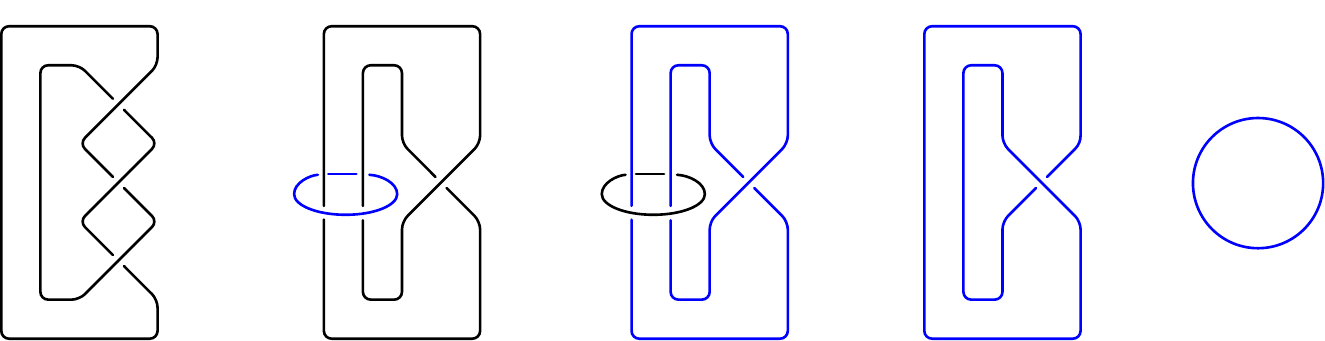}
			\caption{
				A surgery diagram showing that, topologically, the $+5$-surgery on a trefoil and the $-5$-surgery on the unknot give diffeomorphic 3-manifolds.
			} \label{fig:kirby}
		\end{figure}
	\end{center}

	Finally, we observe that $ (Y_n, \xi_n) $, the product of $ n $-fold Legendrian surgery on the $ \delta_2 $ component of $ (\Sigma_{1,2}, \varphi_0) $, is supported by the open book $ (\Sigma_{1,2}, \varphi_n) $, where $ \varphi_n = \tau_\alpha \tau_\beta \tau_\gamma^{-1} \tau_{\delta_1} \tau_{\delta_2}^{4 + n} $. Since Legendrian surgery preserves Stein fillability~\cite{Eliashberg-stein, Weinstein-stein}, this yields an infinite family of Stein-fillable contact manifolds supported by $ (\Sigma_{1,2}, \varphi_n) $ for $ n \geqslant 0 $.

\section{Non-positivity of $ \varphi_n $}
\label{sec:non-positive}

The purpose of this section is to show that the mapping classes $\varphi_n \in \Gamma_{\ourpg}$ do not admit positive factorisations into Dehn twists for all $ n \geqslant 0 $.

Recall that Luo~\cite{Luo}, building on work of Gervais~\cite{Gervais}, showed that the mapping class group of a compact oriented surface admits a presentation in which generators are Dehn twists, and all relations are supported in sub-surfaces homeomorphic to either $\Sigma_{1,1}$ or $\Sigma_{0,4}$. The latter case corresponds to the well-known \emph{lantern relation}, which equates the composition of Dehn twists along curves isotopic to the four boundary components of the sub-surface with a composition of three other twists, illustrated in Figure~\ref{fig:lantern}. Note that if one or more of the boundary curves are homotopically trivial, the relation reduces to the identity. In what follows, given a surface $\Sigma$, we will accordingly refer to any sub-surface homeomorphic to $\Sigma_{0,4}$, none of whose boundary components bound discs in $ \Sigma $, as a \emph{lantern}.

	\begin{center}
		\begin{figure}[h]
			\centering
			\def\svgwidth{0.4\textwidth}
\begingroup%
  \makeatletter%
  \providecommand\color[2][]{%
    \errmessage{(Inkscape) Color is used for the text in Inkscape, but the package 'color.sty' is not loaded}%
    \renewcommand\color[2][]{}%
  }%
  \providecommand\transparent[1]{%
    \errmessage{(Inkscape) Transparency is used (non-zero) for the text in Inkscape, but the package 'transparent.sty' is not loaded}%
    \renewcommand\transparent[1]{}%
  }%
  \providecommand\rotatebox[2]{#2}%
  \newcommand*\fsize{\dimexpr\f@size pt\relax}%
  \newcommand*\lineheight[1]{\fontsize{\fsize}{#1\fsize}\selectfont}%
  \ifx\svgwidth\undefined%
    \setlength{\unitlength}{103.5737706bp}%
    \ifx\svgscale\undefined%
      \relax%
    \else%
      \setlength{\unitlength}{\unitlength * \real{\svgscale}}%
    \fi%
  \else%
    \setlength{\unitlength}{\svgwidth}%
  \fi%
  \global\let\svgwidth\undefined%
  \global\let\svgscale\undefined%
  \makeatother%
  \begin{picture}(1,0.79703759)%
    \lineheight{1}%
    \setlength\tabcolsep{0pt}%
    \put(0,0){\includegraphics[width=\unitlength,page=1]{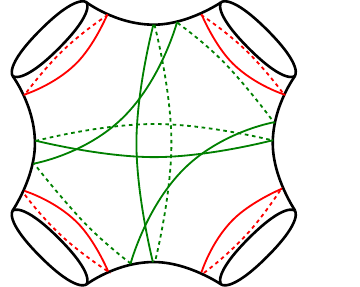}}%
    \put(0.01047526,0.72387753){\color[rgb]{0,0,0}\makebox(0,0)[lt]{\lineheight{1.25}\smash{\begin{tabular}[t]{l}$\delta_1$\end{tabular}}}}%
    \put(0.74907943,0.73835998){\color[rgb]{0,0,0}\makebox(0,0)[lt]{\lineheight{1.25}\smash{\begin{tabular}[t]{l}$\delta_2$\end{tabular}}}}%
    \put(0.74907943,0.01423814){\color[rgb]{0,0,0}\makebox(0,0)[lt]{\lineheight{1.25}\smash{\begin{tabular}[t]{l}$\delta_3$\end{tabular}}}}%
    \put(0.03944015,0.02872038){\color[rgb]{0,0,0}\makebox(0,0)[lt]{\lineheight{1.25}\smash{\begin{tabular}[t]{l}$\delta_4$\end{tabular}}}}%
    \put(-0.00400719,0.41974702){\color[rgb]{0,0,0}\makebox(0,0)[lt]{\lineheight{1.25}\smash{\begin{tabular}[t]{l}$\sigma_1$\end{tabular}}}}%
    \put(0.34357128,0.76732591){\color[rgb]{0,0,0}\makebox(0,0)[lt]{\lineheight{1.25}\smash{\begin{tabular}[t]{l}$\sigma_2$\end{tabular}}}}%
    \put(0.50287797,0.2604395){\color[rgb]{0,0,0}\makebox(0,0)[lt]{\lineheight{1.25}\smash{\begin{tabular}[t]{l}$\sigma_3$\end{tabular}}}}%
  \end{picture}%
\endgroup%

			\caption{
				The lantern relation on $ \Sigma_{0,4} $ is $ \tau_{\delta_1} \tau_{\delta_2} \tau_{\delta_3} \tau_{\delta_4} = \tau_{\sigma_1} \tau_{\sigma_2} \tau_{\sigma_3} $, up to cyclic permutation of $ \tau_{\sigma_i} $ and reordering of $ \tau_{\delta_i} $.
			} \label{fig:lantern}
		\end{figure}
	\end{center}

\vspace{-1.4\baselineskip} We begin with a simple observation. Letting $|\epsilon|_w$ denote the total exponent of $\tau_\epsilon$ in a word $w$ of Dehn twists, we have:

\begin{lem}
\label{lem:boundary_twists}
Let $\delta_1$ and $\delta_2$ denote curves isotopic to the boundary components of $\ourpg$ and let $w$ be a word in Dehn twists about curves on $\ourpg$. Then the number $|\delta_2|_w - |\delta_1|_w$ depends only on the mapping class of $w$.

\end{lem}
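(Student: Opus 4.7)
The plan is to define a candidate homomorphism $\Psi\colon\Gamma_{\ourpg}\to\Z$ by setting $\Psi(\tau_c)=-1$ if $c\simeq\delta_1$, $\Psi(\tau_c)=+1$ if $c\simeq\delta_2$, and $\Psi(\tau_c)=0$ otherwise; the lemma is then equivalent to the assertion that $\Psi$ descends to $\Gamma_{\ourpg}$. By the Luo--Gervais presentation recalled above the lemma, it suffices to verify that $\Psi$ vanishes on every defining relation, and such relations are each supported in a sub-surface $\Sigma'\subset\ourpg$ homeomorphic to either $\Sigma_{1,1}$ or $\Sigma_{0,4}$.

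For $\Sigma'\cong\Sigma_{1,1}$, the curve $\partial\Sigma'$ separates $\ourpg$ into $\Sigma'$ and a pair of pants, whereas each $\delta_i$ is boundary-parallel and separates off an annulus; consequently $\partial\Sigma'\not\simeq\delta_i$ in $\ourpg$. Every other simple closed curve appearing in such a relation is essential in $\Sigma'$ and has positive geometric intersection with some other essential curve of $\Sigma'$, whereas the boundary-parallel curves $\delta_1,\delta_2$ have zero geometric intersection with every essential curve on $\ourpg$. Hence no $\Sigma_{1,1}$-relation involves $\tau_{\delta_1}$ or $\tau_{\delta_2}$, and $\Psi$ vanishes on these trivially.

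For $\Sigma'\cong\Sigma_{0,4}$ --- the lantern case, which carries the main content of the proof --- the non-trivial relation is $\tau_{e_1}\tau_{e_2}\tau_{e_3}\tau_{e_4}=\tau_{\sigma_1}\tau_{\sigma_2}\tau_{\sigma_3}$ for the boundary-parallel curves $e_j$ and essential interior curves $\sigma_i$ of $\Sigma'$. After isotoping so that $\Sigma'\cap\partial\ourpg=\varnothing$, the Euler-characteristic identity $\chi(\ourpg\setminus\mathrm{int}(\Sigma'))=0$ together with the lantern hypothesis that no $e_j$ bounds a disc forces the complement of $\Sigma'$ to be a disjoint union of annuli. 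These annuli partition $\{e_1,\ldots,e_4,\delta_1,\delta_2\}$ into three isotopy-pairs in $\ourpg$; since $\delta_1\not\simeq\delta_2$ in $\ourpg$ (otherwise a cobounding annulus would leave a closed component in the complement, contradicting connectedness), each $\delta_i$ is paired with a distinct $e_{j(i)}$. Thus exactly one $e_j$ is isotopic to $\delta_1$ and exactly one to $\delta_2$, giving $\sum_j\Psi(\tau_{e_j})=0$.

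The principal obstacle is the final step, showing $\sum_i\Psi(\tau_{\sigma_i})=0$, i.e.\ that no interior curve $\sigma_k$ of the lantern is isotopic to $\delta_1$ or $\delta_2$ in $\ourpg$. I would argue by contradiction: if $\sigma_k\simeq\delta_1$, then $\sigma_k$ cobounds an annulus $A$ with $\delta_1$ in $\ourpg$, and the pair of pants $P\subset\Sigma'$ on the $\delta_1$-side of $\sigma_k$ embeds as a sub-surface of $A$. The two boundary components of $P$ other than $\sigma_k$ --- which are two of the $e_j$ --- must then lie either in $\partial A\setminus\sigma_k=\delta_1$ or in $\mathrm{int}(A)$, where in the latter case they are boundary-parallel in $A$; in either case they are isotopic to $\delta_1$ in $\ourpg$. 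This yields two $e_j$ isotopic to $\delta_1$, contradicting the count established in the preceding paragraph. Symmetrically no $\sigma_k\simeq\delta_2$, so the $\sigma$-contribution vanishes and $\Psi$ vanishes on the lantern relation, completing the verification.
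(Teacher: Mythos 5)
Your proof is correct and follows essentially the same route as the paper's: reduce via the Luo presentation to checking the lantern relation (boundary-parallel twists cannot appear in any $\Sigma_{1,1}$-supported relation), then use the Euler characteristic of a lantern's complement in $\Sigma_{1,2}$ to pin down how its boundary curves sit relative to $\delta_1,\delta_2$. In fact your write-up supplies two points the paper's proof leaves implicit --- that \emph{exactly} one lantern boundary curve is parallel to each $\delta_i$, and that none of the interior curves $\sigma_k$ can be boundary-parallel --- so it is, if anything, more complete than the original.
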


\begin{proof}
Using the presentation of Luo in~\cite{Luo}, we see that any non-trivial relation which contains either ${\tau_{\delta_i}}$ must be a lantern relation; the claim follows immediately by showing that every lantern in $\ourpg$ has boundary components isotopic to each $\delta_i$. To see this, let $\Lambda \subset \ourpg$ be any lantern, and $\epsilon$ a curve isotopic to a boundary component of $\Lambda$ but not isotopic to either $\delta_i$. Now, if  $\epsilon$ is non-separating in $\ourpg$, then $\overline{\ourpg\setminus\epsilon}$ is a lantern, so $\Lambda$ is as claimed. On the other hand, if $\epsilon$ is separating, then as it is not boundary-parallel in $\ourpg$ it must cut the surface into $\Sigma_{0,3} \sqcup \Sigma_{1,1}$, neither of which contains a lantern, giving a contradiction.
\end{proof}

We are now ready to prove that $ \varphi_n $ cannot be written as a product of positive twists for any $ n \geqslant 0 $.

\begin{thm}
	The monodromy $ \varphi_n \in \Gamma_{\Sigma_{1,2}} $ represented by the word $ \tau_\alpha \tau_\beta \tau_\gamma^{-1} \tau_{\delta_1} \tau_{\delta_2}^{4 + n} $ does not admit a positive factorisation for any $ n \geqslant 0 $.
\end{thm}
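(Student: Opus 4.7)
The plan is to reduce the general case to $n=0$ using centrality of $\tau_{\delta_2}$, and then to combine the length constraint imposed by the unique Stein filling of $(L(5,1),\xi)$ with an elementary computation of the action on $H_1(\ourpg)$ to derive a contradiction. For the reduction, suppose $\varphi_n = w$ is a positive word in Dehn twists. Lemma~\ref{lem:boundary_twists} gives $|\delta_2|_w - |\delta_1|_w = 3+n$, so in particular $|\delta_2|_w \geq n$. Since $\tau_{\delta_2}$ is a boundary-parallel twist and hence central in $\Gamma_{\ourpg}$, I can pull $n$ copies of $\tau_{\delta_2}$ out of $w$ to obtain a positive word equal to $\varphi_n\tau_{\delta_2}^{-n} = \varphi_0$. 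Hence it suffices to show $\varphi_0$ admits no positive factorisation.

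Next, assume $\varphi_0 = \tau_{c_1}\cdots\tau_{c_m}$ is a positive factorisation. This realises $(Y_0,\xi_0) = (L(5,1),\xi)$ as the boundary of an allowable Lefschetz fibration over $D^2$ with fibre $\ourpg$, whose total space is a Stein filling of $(Y_0,\xi_0)$. By the uniqueness recalled in Section~\ref{sec:trefoil}, this filling must be diffeomorphic to the $-5$ disc bundle over $S^2$, which has $\chi=2$; since $\chi = \chi(\ourpg) + m = -2 + m$, we obtain $m=4$. Combined with $|\delta_2|_w - |\delta_1|_w = 3$ and $|\delta_1|_w + |\delta_2|_w \leq 4$, this forces $|\delta_1|_w = 0$ and $|\delta_2|_w \in \{3,4\}$, so $w$ contains at most one twist about a non-boundary-parallel curve.

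To close, note that $\gamma$ is separating (splitting $\ourpg$ into $\Sigma_{1,1}\sqcup\Sigma_{0,3}$) and $\delta_1,\delta_2$ are boundary-parallel, so $\tau_\gamma,\tau_{\delta_1},\tau_{\delta_2}$ all act trivially on $H_1(\ourpg)$. Hence $\varphi_0$ acts as $\tau_\alpha\tau_\beta$, whose restriction to the torus summand spanned by $\alpha,\beta$ has characteristic polynomial $x^2-x+1$ and thus eigenvalues equal to primitive sixth roots of unity. However, every Dehn twist acts on $H_1$ as a transvection, with minimal polynomial dividing $(x-1)^2$. The subcase $|\delta_2|_w = 4$ yields $w = \tau_{\delta_2}^4$, which acts trivially on $H_1$; the subcase $|\delta_2|_w = 3$ yields $w = \tau_c\tau_{\delta_2}^3$ for a single Dehn twist $\tau_c = \varphi_0\tau_{\delta_2}^{-3}$, which would then act on $H_1$ as $\varphi_0$ does. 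Both conclusions contradict the eigenvalue computation, ruling out a positive factorisation.

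I expect the main subtlety to lie in justifying $m=4$: it rests both on the correspondence between positive factorisations and Stein fillings via Lefschetz fibrations, and on the preservation of Euler characteristic under the uniqueness notion cited in Section~\ref{sec:trefoil}. Once this is secured, the remainder is a clean combination of Lemma~\ref{lem:boundary_twists} with the transvection property of Dehn twists on $H_1$.
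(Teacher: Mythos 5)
Your proposal is correct, but it takes a genuinely different route from the paper. The paper also begins by invoking Lemma~\ref{lem:boundary_twists} to extract boundary twists, but it pulls out all $3+n$ copies of $\tau_{\delta_2}$, reducing to $\varphi' = \tau_\alpha \tau_\beta \tau_\gamma^{-1} \tau_{\delta_1} \tau_{\delta_2}$; it then identifies $(\ourpg,\varphi')$ as supporting inadmissible transverse $(+2)$-surgery on the trefoil, i.e.\ the small Seifert fibred space $M(-1;\frac12,\frac13,\frac14)$, which admits no Stein-fillable contact structure at all by Lecuona--Lisca. You instead stop at $\varphi_0$, use the uniqueness of the Stein filling of $(L(5,1),\xi_0)$ (already cited in Section~\ref{sec:trefoil}) to force $\chi(W)=2$ and hence a factorisation of length exactly $4$, and finish with the transvection property of Dehn twists on $H_1$. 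What your approach buys is independence from the Lecuona--Lisca classification, reusing only inputs already present in Section~\ref{sec:trefoil} plus the standard fact that a length-$m$ positive factorisation yields a Stein filling with $\chi = \chi(\ourpg)+m$; what it costs is that this last fact, together with the identification of the unique filling as the Euler number $-5$ disc bundle over $S^2$, must be justified carefully (in particular, a vanishing cycle isotopic to the separating curve $\gamma$ is not homologically essential, so one should note that stabilising to reach an allowable Lefschetz fibration preserves $\chi$). Two small points: your statement that $|\delta_2|_w \in \{3,4\}$ is internally inconsistent, since Lemma~\ref{lem:boundary_twists} gives $|\delta_2|_w - |\delta_1|_w = 3$ exactly, so with $m=4$ only $|\delta_1|_w=0$, $|\delta_2|_w=3$ survives (your extra subcase is vacuous and harmlessly ruled out anyway); and the inequality $|\delta_1|_w+|\delta_2|_w\leqslant 4$ implicitly uses that curves parallel to $\delta_1$ are not isotopic to curves parallel to $\delta_2$, which is true but worth a word.
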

\begin{proof}
	Suppose otherwise, and let $ w $ be be a positive factorisation of $ \varphi_n $. Then $ |\delta_1|_w \geqslant 0 $ and, by Lemma~\ref{lem:boundary_twists}, we have $ |\delta_2|_w \geqslant 3 + n $. Since boundary-parallel Dehn twists commute with all other twists, we can write $ w = w' \tau_{\delta_2}^{3 + n} $ for $ w' $ a positive factorisation of $ \varphi' = \tau_\alpha \tau_\beta \tau_\gamma^{-1} \tau_{\delta_1} \tau_{\delta_2} $.

	Now, following the procedure used in Section~\ref{sec:trefoil}, we recover that $ (\Sigma_{1,2}, \varphi') $, shown in Figure~\ref{fig:phiprime}, supports $ (Y', \xi') $, the result of inadmissible transverse $(+2)$-surgery on a right-handed trefoil.


	\begin{center}
		\begin{figure}[h]
			\centering
			\def\svgwidth{0.4\textwidth}
			\makebox[\textwidth][c]{
\begingroup%
  \makeatletter%
  \providecommand\color[2][]{%
    \errmessage{(Inkscape) Color is used for the text in Inkscape, but the package 'color.sty' is not loaded}%
    \renewcommand\color[2][]{}%
  }%
  \providecommand\transparent[1]{%
    \errmessage{(Inkscape) Transparency is used (non-zero) for the text in Inkscape, but the package 'transparent.sty' is not loaded}%
    \renewcommand\transparent[1]{}%
  }%
  \providecommand\rotatebox[2]{#2}%
  \newcommand*\fsize{\dimexpr\f@size pt\relax}%
  \newcommand*\lineheight[1]{\fontsize{\fsize}{#1\fsize}\selectfont}%
  \ifx\svgwidth\undefined%
    \setlength{\unitlength}{126.86378058bp}%
    \ifx\svgscale\undefined%
      \relax%
    \else%
      \setlength{\unitlength}{\unitlength * \real{\svgscale}}%
    \fi%
  \else%
    \setlength{\unitlength}{\svgwidth}%
  \fi%
  \global\let\svgwidth\undefined%
  \global\let\svgscale\undefined%
  \makeatother%
  \begin{picture}(1,0.72569646)%
    \lineheight{1}%
    \setlength\tabcolsep{0pt}%
    \put(0,0){\includegraphics[width=\unitlength,page=1]{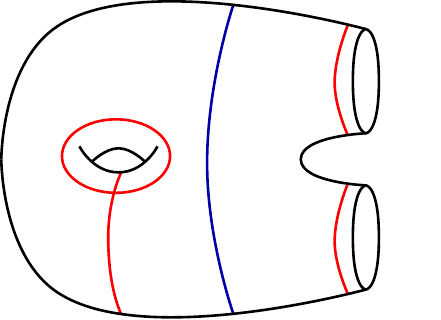}}%
    \put(0.24426689,0.47955621){\color[rgb]{0,0,0}\makebox(0,0)[lt]{\lineheight{1.25}\smash{\begin{tabular}[t]{l}$\alpha$\end{tabular}}}}%
    \put(0.27490143,0.15117995){\color[rgb]{0,0,0}\makebox(0,0)[lt]{\lineheight{1.25}\smash{\begin{tabular}[t]{l}$\beta$\end{tabular}}}}%
    \put(0.48893714,0.35782506){\color[rgb]{0,0,0}\makebox(0,0)[lt]{\lineheight{1.25}\smash{\begin{tabular}[t]{l}$\gamma$\end{tabular}}}}%
    \put(0.79514412,0.69506974){\color[rgb]{0,0,0}\makebox(0,0)[lt]{\lineheight{1.25}\smash{\begin{tabular}[t]{l}$\delta_1$\end{tabular}}}}%
    \put(0.79514412,-0.00028923){\color[rgb]{0,0,0}\makebox(0,0)[lt]{\lineheight{1.25}\smash{\begin{tabular}[t]{l}$\delta_2$\end{tabular}}}}%
  \end{picture}%
\endgroup%
}
			\caption{
				An open book decomposition $ (\Sigma_{1,2}, \tau_\alpha \tau_\beta \tau_\gamma^{-1} \tau_{\delta_1} \tau_{\delta_2}) $ supporting the result of inadmissible transverse $(+2)$-surgery on a right-handed trefoil in $ (S^3, \xi_{\textrm{std}}) $.
			} \label{fig:phiprime}
		\end{figure}
	\end{center}

Denote by $ M(e_0; r_1, r_2, r_3) $ the Seifert fibred space given by the surgery description in Figure~\ref{fig:seifert}. Consider $ M(-1; \frac{1}{2}, \frac{1}{3}, \frac{1}{4}) $. One can verify by sequentially blowing down $ -1 $-framed components that it is orientation-preserving diffeomorphic to $ Y' $, the $ (+2) $-surgery on a right-handed trefoil in $ S^3 $. However, $ (M(-1; \frac{1}{2}, \frac{1}{3}, \frac{1}{4}), \xi) $ is not Stein-fillable for any contact structure $ \xi $ by~\cite[Theorem~1.4]{LiscaLecuona}. By Giroux~\cite{giroux}, it follows that no monodromy of an open book decomposition supporting $ (Y', \xi') $ admits a positive factorisation. Hence no positive factorisation of $ \varphi' $ exists, supplying a contradiction. \qedhere

	\begin{center}
		\begin{figure}[h]
			\centering
			\def\svgwidth{0.4\textwidth}
			\makebox[\textwidth][c]{
\begingroup%
  \makeatletter%
  \providecommand\color[2][]{%
    \errmessage{(Inkscape) Color is used for the text in Inkscape, but the package 'color.sty' is not loaded}%
    \renewcommand\color[2][]{}%
  }%
  \providecommand\transparent[1]{%
    \errmessage{(Inkscape) Transparency is used (non-zero) for the text in Inkscape, but the package 'transparent.sty' is not loaded}%
    \renewcommand\transparent[1]{}%
  }%
  \providecommand\rotatebox[2]{#2}%
  \newcommand*\fsize{\dimexpr\f@size pt\relax}%
  \newcommand*\lineheight[1]{\fontsize{\fsize}{#1\fsize}\selectfont}%
  \ifx\svgwidth\undefined%
    \setlength{\unitlength}{98.96589264bp}%
    \ifx\svgscale\undefined%
      \relax%
    \else%
      \setlength{\unitlength}{\unitlength * \real{\svgscale}}%
    \fi%
  \else%
    \setlength{\unitlength}{\svgwidth}%
  \fi%
  \global\let\svgwidth\undefined%
  \global\let\svgscale\undefined%
  \makeatother%
  \begin{picture}(1,0.62625547)%
    \lineheight{1}%
    \setlength\tabcolsep{0pt}%
    \put(0,0){\includegraphics[width=\unitlength,page=1]{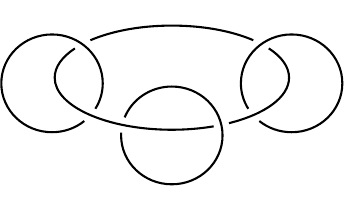}}%
    \put(0.48317835,0.58787022){\color[rgb]{0,0,0}\makebox(0,0)[lt]{\lineheight{1.25}\smash{\begin{tabular}[t]{l}$e_0$\end{tabular}}}}%
    \put(0.02847614,0.1634814){\color[rgb]{0,0,0}\makebox(0,0)[lt]{\lineheight{1.25}\smash{\begin{tabular}[t]{l}$-1/r_1$\end{tabular}}}}%
    \put(0.4073946,0.0119152){\color[rgb]{0,0,0}\makebox(0,0)[lt]{\lineheight{1.25}\smash{\begin{tabular}[t]{l}$-1/r_2$\end{tabular}}}}%
    \put(0.75599944,0.1634814){\color[rgb]{0,0,0}\makebox(0,0)[lt]{\lineheight{1.25}\smash{\begin{tabular}[t]{l}$-1/r_3$\end{tabular}}}}%
  \end{picture}%
\endgroup%
}
			\caption{
				The Seifert fibred 3-manifold $ M(e_0; r_1, r_2, r_3) $.
			} \label{fig:seifert}
		\end{figure}
	\end{center}

\end{proof}




\printbibliography
\end{document}